\documentclass[12pt]{amsart}
\usepackage[left=2.5cm,top=2.5cm,right=2.5cm]{geometry}
\usepackage{verbatim}
\usepackage{amsmath}
\usepackage{amssymb}
\usepackage{MnSymbol}
\usepackage[all,cmtip]{xy}
\usepackage{color}
\usepackage{url}
\usepackage{graphicx}




\numberwithin{equation}{section}

\newtheorem{theorem}[equation]{Theorem}
\newtheorem{lemma}[equation]{Lemma}

\newtheorem{proposition}[equation]{Proposition}

\theoremstyle{remark}
\newtheorem{remark}[equation]{Remark}



\renewcommand{\O}{{\mathcal O}}


\def\AA{{\mathbb A}}
\def\CC{{\mathbb C}}
\def\QQ{{\mathbb Q}}

\def\FF{{\mathbb F}}

\def\ZZ{{\mathbb Z}}

\def\II{{\mathbb I}}

\def\QQl{\bar{\QQ}_l}
\def\BN{\mathcal{B}_N}
\def\tBN{\tilde{\mathcal{B}}_N}
\def\TN{T_N}
\def\TN{\tilde{T}_N}


\newcommand{\tG}{{\tilde{G}}}
\newcommand{\tf}{{\tilde{f}}}

\newcommand{\tpi}{{\tilde{\pi}}}
\newcommand{\tnu}{{\tilde{\nu}}}
\newcommand{\tlambda}{{\tilde{\lambda}}}

\newcommand{\res}{{\text{res}_{G}^{\tilde{G}}}}

\newcommand{\tZ}{{\tilde{Z}}}
\newcommand{\tK}{{\tilde{K}}}

\newcommand{\tT}{{\tilde{T}}}
\newcommand{\tc}{{\tilde{c}}}
\newcommand{\tk}{{\tilde{k}}}

\newcommand{\oI}{{I_{\omega}}}
\newcommand{\tmu}{{\tilde{\mu}}}

\newcommand{\tcc}{{(\tilde{Z}(\mathbb{A})/\tilde{Z}(F))^D}}
\newcommand{\cc}{{(Z(\mathbb{A})/Z(F))^D}}

\newcommand{\cusp}{{A_0}}
\newcommand{\pcusp}{{\Pi_{\text{cusp}}}}
\newcommand{\Fu}{{F^{\ast}}}
\newcommand{\tTheta}{{\tilde{\Theta}}}
\newcommand{\tsigma}{{\tilde{\sigma}}}

\begin{document}

\title{A note on the multiplicity of $SL(n)$ over function fields}

\date{\today}

\author{Yang An}

\begin{abstract}

In \cite{lafforgue2012chtoucas}, Vicent Lafforgue attaches a semisimple Langlands parameter (or, what amounts to the same thing, a $\hat{G}$-pseudocharacter) to every cuspidal automorphic representation of a reductive group $G$ over the field of functions of a smooth projective algebraic curve $X$ over a finite field. Hence, gets a decomposition of the space of cusp forms. In this note, we show that in the case of $G = SL(n)$, Lafforgue's decomposition coincides with the classical decomposition using $L$-packets, and moreover, the number of ($G$-equivalence classes of) extensions of an unramified Hecke character of $G$ to $\hat{G}$-pseudocharacters serves as a natural upper bound on the multiplicity of $SL(n)$.

\end{abstract}

\maketitle

\section{Overview}

Let $k = \FF_q$, where $q=p^m$ is a prime power, and $X/k$ be a smooth irreducible projective curve and $F=k(X)$ its field of rational functions. The set of places of $F$ is denoted as $\vert X \vert$, which is the same thing as the set of closed points of $X$. For each $v \in |X|$, we have $F_v$ its completion, and $\mathcal{O}_v$ its ring of integer in $F_v$. Let $G$ be a connected reductive group over $F$, and $N = \sum_{v\in |X|} n_v v$ be an effective divisor on $X$, and $K_N = \{k \in \Pi_{v\in |X|} G(\mathcal{O}_v): k \equiv 1 (\text{mod } m_v^{n_v})\}$ be the open compact subgroup of level $N$.

Fix some prime number $l \neq p$, let $A_0(G,\bar{\QQ}_l)$ denote the space of cusp forms, and $A_0(G,K_N,\bar{\QQ}_l) = A_0(G,\bar{\QQ}_l)^{K_N}$. In \cite{lafforgue2012chtoucas}, V. Lafforgue construct a commutative algebra $\mathcal{B}_N$, containing the normal Hecke algebra $T_N = \bigotimes_{v \nmid N} T_v$, called the excursion algebra(of level $N$). Moreover, for each excursion character $\nu: \mathcal{B}_N \rightarrow \bar{\QQ}_l$, one can associate a unique Langlands parameter $\sigma_{\nu}: \Gamma \rightarrow \hat{G}(\QQl)$, up to conjugation, where $\Gamma$ is the Galois group of the maximal separable extension of $F$ unramified outside of the support of $N$, and $\hat{G}$ is the Langlands dual group of $G$. More precisely, $\BN$ is generated by excursion operators $S_{m,f,\gamma} \in End(\cusp(G,K_N,\QQl))$, where $f \in \mathcal{O}(\hat{G}^{n})^{\hat{G}}$, the conjugate invariant functions on $\hat{G}^{n}$, and $\gamma = (\gamma_1, \ldots \gamma_n)\in \Gamma^n$. When $V \in Rep(\hat{G})$, the Hecke operator $h_{V,v}$ is $S_{1,Tr_V,Frob_v}$. Moreover, if $\nu$ is a character of $\BN$, then the function $\Theta(m,f,\gamma) = \nu(S_{m,f,\gamma})$ is called a pseudocharacter of $G$. It is the pseudocharacter that gives a unique Langlands parameter $\sigma$, up to conjugacy, they are related by:
$$\Theta(m,f,\gamma) = \nu(S_{m,f,\gamma}) = f(\sigma(\gamma)).$$

Then V. Lafforgue proved a decomposition\cite{lafforgue2012chtoucas} :

\begin{equation}\label{eqn: Lafforgue decomposition}
  A_0(G,K_N,\bar{\QQ}_l) = \bigoplus_{\nu} A_{0,\nu} = \bigoplus_{\sigma} A_{0,\sigma}.
\end{equation}
Moreover, by the identification of excursion operators $S_{1,Tr_V,Frob_v}$ with the Hecke operators, it is easy to see this decomposition is also compatible with the Satake isomorphism, more precisely, for any $\nu: \BN \rightarrow \QQl$, and $v \nmid N$, then $\sigma_{\nu}$ is unramified at $v$, and the semisimple conjugacy class $\sigma_{\nu}(Frob_v)$ corresponds to the Hecke character $\sigma_{\nu} \vert_{T_v}$ under the Satake isomorphism.

On the other hand, we know that $A_0(G,\QQl)$ is a discrete $G(\AA)$ module, and has a decomposition:
\begin{equation} \label{eqn:classical decomposition}
  A_0(G,\QQl) = \bigoplus_{\pi} m(\pi) \pi,
\end{equation}
with pairwise inequivalent irreducible admissible $G(\AA)$ representations, and $m(\pi)$ is a finite nonnegative integer. After taking the $K_N$ fixed part, decomposition (\ref{eqn:classical decomposition}) just becomes:

\begin{equation}\label{eqn:leveled classical decomposition}
  A_0(G,K_N,\QQl) = \bigoplus_{\pi} m(\pi^{K_N}) \pi^{K_N},
\end{equation}

While it is well-known that $m(\pi) = 1$ or $0$ in the case of $G = GL(n)$, in \cite{blasius1994multiplicities}, Blasius constructed infinitely many families of automorphic cuspidal representations that are isomorphic, but not coincide in the case of $G=SL(n)$ over number fields, namely $m(\pi)>1$ for infinitely many $\pi$. This raises the question of higher multiplicities of $SL(n)$. We studied Lafforgue's excursion character in the case of $G = SL(n)$ over function fields, and see how the Lafforgue's decomposition (\ref{eqn: Lafforgue decomposition}), and the idea of pseudocharacters account for the multiplicities of $SL(n)$.

After showing that Lafforgue's decomposition (\ref{eqn: Lafforgue decomposition}) coincides with the classical decomposition by $L$-packets induced from $GL(n)$ in Subsection \ref{subsec: coincide}, we come to Proposition \ref{prop:multiplcity of SLn}, which gives an uppoer bound of multiplicities of $SL(n)$ in terms of the number of extensions:

\begin{proposition}\label{prop:intro multiplicity}
  The number of isomorphic irreducible components of $\cusp(G,\overline{\QQ_l})$ (which corresponds to a character $\lambda$ of some unramified Hecke algebra of $SL(n)$), is bounded above by the number of $G$-equivalent classes of pseudocharacters $\tTheta(m,f,(\gamma_i))$ of GL$(n)$, such that $\tTheta(1,Tr_V,\gamma)$ (where $V$ is any representation of $GL(n)$ that factors through $PGL(n)$) is given by $\lambda$, and $\tTheta(GL(n))(1,Det,\gamma)$ is given by $\tmu$.
\end{proposition}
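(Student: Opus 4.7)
The plan is to combine the $L$-packet identification of Lafforgue's decomposition from the preceding subsection with the multiplicity-one theorem on $\tilde{G} = GL(n)$ (equivalently, the bijection between cuspidal automorphic representations of $\tilde{G}$ and $\tilde{G}$-pseudocharacters furnished by Lafforgue's construction) to translate the multiplicity count for $G = SL(n)$ into a count of $\tilde{G}$-pseudocharacters satisfying the prescribed conditions. Fix throughout a character $\lambda$ of the unramified Hecke algebra of $G$, corresponding to a semisimple Langlands parameter $\sigma \colon \Gamma \to PGL(n, \QQl)$, and a central character lift $\tilde{\mu}$ on the $\tilde{G}$-side.

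First, by Subsection \ref{subsec: coincide}, every irreducible $\pi \subset \cusp(G, \QQl)$ with unramified Hecke character $\lambda$ arises as an irreducible constituent of the restriction $\tilde{\pi}|_{G(\AA)}$ for some cuspidal automorphic representation $\tilde{\pi}$ of $\tilde{G}(\AA)$, and its $G$-pseudocharacter is obtained from the $\tilde{G}$-pseudocharacter $\tTheta$ of $\tilde{\pi}$ by projecting the associated Langlands parameter along $GL(n) \onto PGL(n)$. Consequently $\tTheta(1, \text{Tr}_V, \gamma) = \lambda(\gamma)$ for every $V \in \text{Rep}(\tilde{G})$ that factors through $PGL(n)$, which is condition (a) of the proposition; and $\tTheta(1, \text{Det}, \gamma) = \tmu(\gamma)$ reflects the fixed central character, which is condition (b).

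Second, I would argue that distinct isomorphic copies of a fixed irreducible $\pi$ inside $\cusp(G, K_N, \QQl)$ force the underlying $\tilde{\pi}$'s to have distinct $\tilde{G}$-pseudocharacters, up to the relevant $G$-equivalence. This is the essence of the Blasius phenomenon \cite{blasius1994multiplicities}: twists of $\tilde{\pi}$ by Hecke characters of $\AA^{\ast}/F^{\ast}$ that stabilize $\tilde{\pi}$ account for $L$-packet structure but not for multiplicity, while genuinely non-twist-equivalent $\tilde{\pi}$'s with isomorphic restrictions to $G$ are precisely what produce $m(\pi) > 1$. Fixing $\tmu$ selects a single representative per global twist orbit, and multiplicity one on $GL(n)$ — promoted via Lafforgue to a bijection on the pseudocharacter side — shows that distinct such $\tilde{\pi}$'s correspond to distinct $G$-equivalence classes of $\tilde{G}$-pseudocharacters satisfying conditions (a) and (b). Combining these observations gives the stated upper bound.

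The main obstacle will be the packet-combinatorial step of the argument above: one must carefully handle the interaction between the global stabilizer of character twists of $\tilde{\pi}$ and the $G(\AA)$-decomposition of $\tilde{\pi}|_{G(\AA)}$, in order to verify that each isomorphic copy of $\pi$ in $\cusp(G, K_N, \QQl)$ is indeed accounted for by a distinct $\tilde{G}$-pseudocharacter. The reason we obtain only an inequality, rather than equality, is that the same $\tilde{G}$-pseudocharacter may in principle contribute multiple copies of $\pi$ through distinct components of $\tilde{\pi}|_G$, or that passing to $K_N$-invariants may collapse distinct contributions; the surjectivity of the natural map from copies of $\pi$ to candidate pseudocharacters — which is what the proposition asserts — is nonetheless built into Step 1 via the $L$-packet identification, and this suffices for the bound.
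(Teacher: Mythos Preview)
Your overall strategy matches the paper's: lift each copy of $\pi$ to a cuspidal $\tilde{\pi}$ on $GL(n)$ via Theorem~\ref{thm:main}, count the $G$-equivalence classes of such lifts, and translate these into pseudocharacters via L.~Lafforgue's correspondence. The translation steps (your conditions (a) and (b), obtained via Chebotarev density and the determinant/central-character compatibility) are correct and agree with the paper.

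There is, however, a genuine gap in your Step~2, and your final paragraph reveals a direction error. To bound the number of copies of $\pi$ \emph{above} by the number of pseudocharacters, the map
\[
\{\text{copies of }\pi\text{ in }\cusp(G,\mu)\}\longrightarrow\{\text{$G$-equiv.\ classes of pseudocharacters satisfying (a),(b)}\}
\]
must be \emph{injective}, not surjective as you write. Injectivity says precisely that a single $\tilde{\pi}$ contributes at most one copy of $\pi$ to $\cusp(G,\mu)$, i.e.\ that $\res(\tilde{\pi})$ is multiplicity free. This does \emph{not} follow from multiplicity one for $GL(n)$ (which only asserts $m(\tilde{\pi})=1$ inside $\cusp(\tilde{G})$); it requires Tadi\'c's local result (Theorem~\ref{thm:tadic}, recorded globally as Lemma~\ref{lem:multiplicity}) that each $\tilde{\pi}_v|_{G_v}$ is multiplicity free. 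The paper invokes this explicitly at the end of Subsection~\ref{subsec: coincide} and it is the one nontrivial ingredient your sketch omits.

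Correspondingly, your closing remark is backwards: if ``the same $\tilde{G}$-pseudocharacter may in principle contribute multiple copies of $\pi$,'' that would not merely cost sharpness --- it would destroy the upper bound entirely. The actual slack in the inequality comes from the opposite side: pseudocharacters satisfying (a) and (b) that are not realized by any cuspidal $\tilde{\pi}\in\pcusp(\tilde{G},\tilde{\mu})$, or whose $L$-packet $\res(\tilde{\pi})$ happens not to contain $\pi$.
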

And we will see that the $\lambda$ and $\tmu$ in Proposition \ref{prop:intro multiplicity} determine pseudocharacters $\tTheta$ up to $n$-th roots of unity.

In the rest of this paper, we will fix the following notations, $n$ is a positive integer coprime to $p$, and  $\tilde{G} = GL(n)$, $G = SL(n)$. All notations with $\tilde{}$ will refer to the correponding notion of $GL(n)$, and those without $\tilde{}$ will refer to those of $SL(n)$. For example, $\tilde{T}_N$ will indicate the Hecke algebra (of level $N$) of $GL(n)$, and $\pi$ will indicate a cuspidal representation of $SL(n)$, i.e. $\pi \subset A_0(SL(n), \QQl)$, unless otherwise specified.\\

\textbf{Acknowledgement.} I would like to express my sincere gratitudes to my advisor Michael Harris, for suggesting this problem to me, and many help and encouragement throughout. I would also like to thank Hang Xue for his helpful suggestions.

\section{Restriction of cusp forms}\label{sec:global}

In this section, we review classical theory about the restriction of cusp forms from $GL(n)$ to $SL(n)$, our main reference is \cite{hiraga2012}. Although the theorems proved in \cite{hiraga2012} is for number fields, but the proofs work verbatim in the function field case, provided our standing assumption that $(n,\text{Char}(F)) = 1$. Denote $\tZ$ to be the center for $\tG$, and $Z=\tZ\cap G$. Let $\tmu \in \tcc$, the Pontryagin dual of $\tZ(\AA)/\tZ(F)$, and $\mu \in \cc$. We will suppress the coefficient $\QQl$, and write $\cusp(\tG,\tmu)$ to indicate the cusp forms with central character $\tmu$. We denote $\pcusp(\tG,\tmu)$ to be the set of irreducible representations of $\tG(\AA)$ appearing in $\cusp(\tG,\tmu)$. Let $\mathcal{L}(\tpi)$ be the maximal $\tpi$-isotypic subspace of $\cusp(\tG,\tmu)$. Then $\mathcal{L}(\tpi)=m(\tpi)\tpi$, where $m(\tpi)=1$ by Multiplicity One Theorem for $GL_n$. Similarly, we define $\cusp(G,\mu)$, $\pcusp(G,\mu)$, and $\mathcal{L}(\pi)$ for $\pi \in \pcusp(G,\mu)$ for $SL(n)$.

Before we go into the global theory of restriction of cusp forms, we need to make a little preparations in the local case.

\subsection{Restriction in the local case}
We first make a little general assumption in the local case, suppose $\tG$ is a locally compact, totally disconnected group, i.e. it has a fundamental basis of neighbourhood around $e$ by open compact subgroups. Let $G \subset \tG$ to be an open normal subgroup of finite index.

The following lemmas are carefully proved in \cite{gelbart1982indistinguishability},

\begin{lemma}\label{gk1}
  If $\tpi$ is an irreducible admissible representation of $\tG$, then $\text{Res}_{G}^{\tG}(\tpi)$ is a direct sum of finite number of irreducible admissible representations of $G$ with the same multiplicity.
\end{lemma}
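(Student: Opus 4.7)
The plan is to run the standard Clifford-theory argument for a locally profinite group with an open normal finite-index subgroup. I would organize the proof into four steps.

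First, I would record that $\tpi|_G$ is itself admissible as a $G$-representation: since $G$ is open in $\tG$, every compact open subgroup $K\le G$ is compact open in $\tG$, so $\tpi^{K}$ is finite-dimensional. This also shows $\tpi|_G$ is smooth, and a standard argument using the finite-dimensionality of $\tpi^K$ for some compact open $K\le G$ with $\tpi^K\ne 0$ (take a $G$-submodule minimal among those whose $K$-invariants contain a fixed nonzero vector) produces an irreducible $G$-subrepresentation $\sigma\subset\tpi|_G$.

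Second, I would exploit the normality of $G$: for any $g\in\tG$ the translate $g\sigma$ is again a $G$-stable subspace of $\tpi$ (because $h(g\sigma)=g(g^{-1}hg)\sigma=g\sigma$ for $h\in G$), and as a $G$-module it is isomorphic to the twist $\sigma^{g}$. The sum $\sum_{g\in\tG}g\sigma$ is a nonzero $\tG$-stable subspace, so by irreducibility of $\tpi$ it equals $\tpi$. Since $g\sigma$ depends only on the coset $gG$ and $[\tG:G]<\infty$, the restriction $\tpi|_G$ is a sum of finitely many irreducible $G$-submodules; this forces it to be semisimple and to have only finitely many isomorphism classes of irreducible constituents, all of which are $\tG$-conjugates of $\sigma$, hence automatically admissible.

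Third, for the equal-multiplicity claim, I would pass to the isotypic decomposition $\tpi|_G=\bigoplus_{j=1}^{r}\tpi[\sigma_j]$ with $\sigma_1,\dots,\sigma_r$ the distinct isomorphism classes appearing. The group $\tG$ permutes the isotypic components, and this permutation action is transitive, for otherwise the sum over a proper $\tG$-orbit would give a proper nonzero $\tG$-subrepresentation of $\tpi$. For any $g\in\tG$ sending $\tpi[\sigma_j]$ onto $\tpi[\sigma_{j'}]$, left translation by $g$ is a $G$-semilinear isomorphism (intertwining the $G$-action twisted by $g$), so it sends $\sigma_j$-isotypic vectors bijectively onto $\sigma_{j'}$-isotypic vectors, which shows the multiplicities coincide.

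The main obstacle is really only the very first step: producing an irreducible $G$-subrepresentation inside an admissible smooth representation of a locally profinite group. Once this existence result (standard, as in Bernstein–Zelevinsky or Gelbart–Knapp) is in hand, the remainder of the argument is purely formal Clifford theory and does not use anything beyond the normality of $G$ and the finiteness of $[\tG:G]$.
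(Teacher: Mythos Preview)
Your argument is correct and is precisely the standard Clifford-theory proof. The paper does not give its own proof of this lemma at all: it merely records the statement and cites \cite{gelbart1982indistinguishability}, where exactly the argument you outline (existence of an irreducible $G$-constituent by admissibility, then transitivity of the $\tG$-action on isotypic components to get equal multiplicities) is carried out. So your approach coincides with the one the paper defers to.
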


We denote by $\Pi(\tpi)=\Pi_G^{\tG}(\tpi)$ the set of equivalence classes of irreducible admissible representations of $G$ appearing in the composition series of $\text{Res}_{G}^{\tG}(\tpi)$. Then the above lemma asserts that

\begin{equation}\label{eqn:localdecomp}
  \text{Res}_G^{\tG}(\tpi)=\bigoplus_{\pi \in \Pi(\tpi)} m\cdot \pi,
\end{equation}

where $m$ is the common multiplicity of $\pi \in \Pi(\tpi)$.

\begin{lemma}\label{gk2}
  Let $\tpi$ and $\tpi'$ be irreducible admissible representations of $\tG$, then the following are equivalent:
  \begin{itemize}
    \item $\Pi(\tpi) \cap \Pi(\tpi') \neq \emptyset$,
    \item $\Pi(\tpi) = \Pi(\tpi')$,
    \item $\tpi'\cong \tpi \otimes \omega$ for some $\omega \in (\tG/G)^D$.
  \end{itemize}
\end{lemma}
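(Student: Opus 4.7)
The plan is to establish the cycle (iii) $\Rightarrow$ (ii) $\Rightarrow$ (i) $\Rightarrow$ (iii); the first two implications are essentially formal, while the third carries all of the content. For (iii) $\Rightarrow$ (ii): every $\omega \in (\tG/G)^D$ is trivial on $G$, so $(\tpi \otimes \omega)|_G \cong \tpi|_G$ as $G$-representations, and hence $\Pi(\tpi \otimes \omega) = \Pi(\tpi)$. The implication (ii) $\Rightarrow$ (i) is automatic, since $\Pi(\tpi) \neq \emptyset$ by Lemma \ref{gk1}.

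For the substantive implication (i) $\Rightarrow$ (iii), I would pick $\pi \in \Pi(\tpi) \cap \Pi(\tpi')$ and apply Frobenius reciprocity:
\[
\Hom_{\tG}\bigl(\tpi, \ind(\pi)\bigr) \;\cong\; \Hom_G\bigl(\tpi|_G, \pi\bigr) \;\neq\; 0,
\]
so $\tpi$ occurs as an irreducible constituent of $\ind(\pi)$, and likewise for $\tpi'$. It is therefore enough to show that every irreducible constituent of $\ind(\pi)$ is of the form $\tpi \otimes \omega$ for some $\omega \in (\tG/G)^D$.

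To carry this out, I would use the projection formula. Since $\pi \hookrightarrow \tpi|_G$, the representation $\ind(\pi)$ is a direct summand of $\ind(\tpi|_G)$, and the projection formula gives
\[
\ind(\tpi|_G) \;\cong\; \tpi \otimes \ind(\mathbf{1}_G).
\]
Under the standing hypothesis of this subsection (and with $\tG/G$ abelian, which is the implicit assumption underlying the notation $(\tG/G)^D$ and which holds in the local $SL(n) \subset GL(n)$ case of interest), $\ind(\mathbf{1}_G)$ is the regular representation of the finite abelian group $\tG/G$ inflated to $\tG$, which decomposes as $\bigoplus_{\omega \in (\tG/G)^D} \omega$. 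Therefore
\[
\ind(\tpi|_G) \;\cong\; \bigoplus_{\omega \in (\tG/G)^D} \tpi \otimes \omega,
\]
and every irreducible constituent of $\ind(\pi)$ — in particular $\tpi'$ — must be isomorphic to $\tpi \otimes \omega$ for some $\omega$, which is exactly (iii).

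The only delicate point is the semisimplicity of $\ind(\mathbf{1}_G)$ together with the explicit decomposition into characters; both hinge on $\tG/G$ being finite and abelian, which is part of the standing setup. Beyond that, the argument is a textbook application of Clifford theory via Frobenius reciprocity and the projection formula, so no serious obstacle is anticipated.
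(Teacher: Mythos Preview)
The paper does not give its own proof of this lemma; it simply cites \cite{gelbart1982indistinguishability} (``The following lemmas are carefully proved in \cite{gelbart1982indistinguishability}''). Your argument is correct and is precisely the standard Clifford-theoretic proof one finds in that reference: Frobenius reciprocity embeds both $\tpi$ and $\tpi'$ into $\ind(\pi)$, and the projection formula together with the decomposition of the regular representation of the finite abelian quotient forces every irreducible constituent of $\ind(\pi)$ to be a character twist of $\tpi$. Your caveat about needing $\tG/G$ abelian is well placed --- the paper's general setup only says ``open normal subgroup of finite index,'' but the notation $(\tG/G)^D$ and the intended application to $SL(n)\subset GL(n)$ make clear that abelian is implicitly assumed, and your proof is complete under that hypothesis.
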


Now, we specialize to our interesting case. For any $v\in |X|$, let $\tG$ be $\tG_v=GL(n,F_v)$, and similarly $G$ be $G_v=SL(n,F_v)$, and $\tpi_v$ is a component of an irreducible cuspidal representation $\tpi$, with $(n,\text{Char}(F))=1$. Suppose $\tpi$ has a central character, i.e. there exists a character $\tmu$ of $\Fu$ such that $\tpi(x)=\tmu(x)\cdot \text{Id}$, where $x$ is identified with its diagonal embedding of $\Fu$ into $\tG_v$. Let $H=\tZ\cdot G$ be a subgroup of $\tG$, we then have an isomorphism (as a topological group) $\tG/G \cong \Fu$ through determinant map, where $H/G$ is the inverse image of $\Fu^n$, the group of $n$-th power. Since $(n,\text{Char}(F))=1$, we know that $\Fu^n$ is an open normal subgroup of $\Fu$ of finite index. Hence, the same holds for $H\subset \tG$.

Since $H=\tZ G$, where $\tpi_v$ is just the scalar when restricted to $\tZ$. We know that a representation of $\tZ G$ is irreducible if and only if its restriction on $G$ is irreducible, hence Lemma \ref{gk1} and Lemma \ref{gk2} still holds for our $\tG_v$ and $G_v$. Finally, we note the following theorem of Tedic \cite{tadic1992notes}(Theorem 1.2):
\begin{theorem}\label{thm:tadic}
  For an irreducible smooth representation $(\tpi_v,V)$ of $\tG_v$, $\tpi_v \vert_{G_v}$ is multiplicity free.
\end{theorem}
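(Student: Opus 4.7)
The plan is to apply Clifford theory to the tower $G_v \subset \tZ(F_v) G_v \subset \tG_v$. Because $(n,\text{char}(F)) = 1$, the quotient $\tG_v/(\tZ(F_v) G_v) \cong F_v^{\ast}/(F_v^{\ast})^n$ is a finite abelian group, and $\tpi_v|_{\tZ(F_v) G_v}$ remains irreducible (since $\tZ(F_v)$ acts by the central character $\tmu_v$). Lemma \ref{gk1} therefore gives
$$\tpi_v|_{G_v} = m \cdot \bigoplus_{\pi \in \Pi(\tpi_v)} \pi$$
with some common multiplicity $m$, and the task reduces to showing $m = 1$.

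The next step is to describe the intertwining algebra $A := \End_{G_v}(\tpi_v|_{G_v})$. By a standard Mackey/Clifford-type computation, $A$ is isomorphic to a twisted group algebra $\CC^{c}[X(\tpi_v)]$, where
$$X(\tpi_v) := \{\chi: F_v^{\ast}/(F_v^{\ast})^n \to \CC^{\times} \mid \tpi_v \otimes (\chi\circ\det) \cong \tpi_v\}$$
is the stabilizer of $\tpi_v$ under character twisting, and $c \in Z^2(X(\tpi_v), \CC^{\times})$ is a 2-cocycle recording the failure of the intertwiners $A_{\chi}: \tpi_v \otimes (\chi\circ\det) \to \tpi_v$ to compose canonically. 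Since $X(\tpi_v)$ is abelian, $A$ is commutative (equivalently $m = 1$) precisely when $c$ is a symmetric cocycle, cohomologous to the trivial one. So the entire statement reduces to the triviality of $c$.

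The main obstacle is therefore to exhibit a canonical family $\{A_{\chi}\}$ of intertwiners with untwisted composition law. I would first handle the generic case via the Whittaker model: the uniqueness (up to scalar) of the Whittaker functional $\Lambda$ on $\tpi_v$ allows one to normalize each $A_{\chi}$ by demanding compatibility with $\Lambda$. This normalization is canonical, hence forces $A_{\chi_1}A_{\chi_2} = A_{\chi_1\chi_2}$, so $c$ is trivial. For non-generic irreducible $\tpi_v$, I would reduce to the generic case through the Zelevinsky--Tadic classification: realize $\tpi_v$ as the Langlands subquotient of a parabolically induced representation $\operatorname{Ind}_{P}^{\tG_v}(\tau)$, where $\tau$ is an essentially square-integrable (hence generic) representation of a Levi subgroup, and transport the triviality of the cocycle from $\tau$ to $\tpi_v$ by following the intertwining operators through parabolic induction and the Jacquet functor. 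Checking that cocycle triviality passes cleanly through this classification, and in particular survives the transition to Langlands subquotients of standard modules, is the technical heart that I expect to consume most of the work.
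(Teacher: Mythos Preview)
The paper does not prove this statement at all; it is simply quoted as Theorem~1.2 of Tadi\'c~\cite{tadic1992notes} and used as a black box, so there is no in-paper argument to compare your proposal against.

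That said, your outline is the standard route and is close in spirit to Tadi\'c's original argument. The generic case via uniqueness of the Whittaker functional is clean and correct: normalizing each $A_\chi$ to preserve $\Lambda$ forces $A_{\chi_1}A_{\chi_2}=A_{\chi_1\chi_2}$, so the cocycle is a coboundary and $\End_{G_v}(\tpi_v)$ is commutative. For the non-generic case, your plan to pass through the Zelevinsky classification is the right idea, but be aware that the stabilizer $X(\tpi_v)$ of the Langlands quotient need not coincide with the stabilizer of the inducing datum $\tau$, so ``transporting the cocycle from $\tau$'' is not quite the right picture. In practice one works with the multisegment presentation and observes that twisting by $\chi\circ\det$ translates each segment, which lets one normalize the intertwiners on the full induced module and then check the normalization descends to the unique irreducible quotient. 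You have correctly flagged this descent as the technical crux; it is genuine work but no new obstruction appears.
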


\subsection{(Locally) $G$-equivalence}

For $\mu \in (Z(\AA)/Z(F))^D$, we put
\begin{equation*}
  [\mu]=\{\tmu \in (\tZ(\AA)/\tZ(F))^D \vert \text{Res}^{\tZ(\AA)}_{Z(\AA)}\tmu=\mu\}.
\end{equation*}
For $\tmu,\tmu' \in (\tZ(\AA)/\tZ(F))^D$, we say that $\tmu$ and $\tmu'$ are $G$-equivalent if there exists $\omega \in (\tG(\AA)/\tG(F)G(\AA))^D$ such that \begin{equation*}
  \tmu'\cong\tmu \otimes \text{Res}^{\tG(\AA)}_{\tZ(\AA)}\omega.
\end{equation*}
It can be shown that $\tmu,\tmu'$ are $G$-equivalent, if and only if they are in the same $[\mu]$.

For $\tf\in \cusp (\tG, \tmu)$, we define $\res(\tf)$ to be the restriction of $\tf$ to a function on  $G(\AA)$. For $\tmu \in [\mu]$, we clearly have $\res\cusp(\tG,\tmu)\subset \cusp(G,\mu)$.
\begin{remark}
  This is different from Res, which we reserved for restriction as abstract representation.
\end{remark}

Let $\tpi \in \pcusp(\tG, \tmu)$, and $\tpi' \in \pcusp(\tG, \tmu')$. We say that $\tpi'$ and $\tpi$ are \textbf{locally $G$-equivalent} if, for any place $v$ of $F$, there exists $\omega_v \in (\tG(F_v)/G(F_v))^D$ such that $\tpi'_v\cong \tpi_v \otimes \omega_v$ (or equivalently, there exists $\omega \in (\tG(\AA)/G(\AA))^D$ such that $\tpi' \cong \tpi \otimes \omega$). Furthermore, $\tpi'\in\pcusp(\tG,\tmu')$ and $\tpi \in \pcusp(\tG,\tmu)$ are \textbf{$G$-equivalent}, if there exists $\omega \in (\tG(\AA)/G(\AA)\tG(F))^D$, such that $\tpi'\cong \tpi \otimes \omega$. If $\tpi'$ and $\tpi$ are $G$-equivalent, then $\tmu$ and $\tmu'$ are $G$-equivalent. If $\tpi,\tpi'\in \pcusp(\tG,\tmu)$, then they are $G$-equivalent, if and only if the chosen $\omega \in (\tG(\AA)/G(\AA)\tZ(\AA)\tG(F))^D$. We denote the (resp. locally) $G$-equivalent class of $\tpi$ in $\pcusp(\tG,\tmu)$ to be $\{\tpi\}_G$(resp. $\{\tpi\}^{\text{loc}}_G$), and we write $\pcusp(\tG,\tmu)_G$ to be the set of $G$-equivalent classes in $\pcusp(\tG,\tmu)$.

 We put

\begin{equation*}
  X(\tpi)=\{\omega \in (\tG(\AA)/\tG(F)G(\AA)\tZ(\AA))^D \vert \tpi\otimes \omega \cong \tpi\},
\end{equation*}
\begin{equation*}
   X_{\text{loc}}(\tpi)=\{\omega \in (\tG(\AA)/G(\AA)\tZ(\AA))^D \vert \tpi\otimes \omega \cong \tpi\}.
\end{equation*}

For $\omega \in (\tG(\AA)/\tG(F)G(\AA))^D$, we define the twisting operator
\begin{equation*}
  \oI: \cusp(\tG,\tmu) \rightarrow \cusp(\tG, \tmu\otimes \text{Res}^{\tG(\AA)}_{\tZ(\AA)}\omega)
\end{equation*}
by $\oI\tf(x)=\omega(x)\tf(x)$. Then
\begin{equation*}
  \oI(\mathcal{L}(\tpi))=\mathcal{L}(\tpi\otimes\omega).
\end{equation*}
This implies $$\res\mathcal{L}(\tpi)=\res\mathcal{L}(\tpi\otimes\omega).$$
Note that we need $\omega \in (\tG(\AA)/\tG(F)G(\AA))^D$ rather than simply $\omega \in (\tG(\AA)/G(\AA))^D$. Otherwise $\omega(x)\tf(x)$ is not necessarily $G(F)$-invariant.

Let $S(\tpi)=\{\oI \vert \omega \in X(\tpi)\}$, then $S(\tpi)$ is commutative and acts on $\mathcal{L}(\tpi) \cong \tpi$. For $\eta \in X(\tpi)^D$, we put $$\mathcal{L}(\tpi)^\eta=\{\tf \in \mathcal{L}(\tpi) \vert \oI \tf = \eta(\omega) \tf, \text{for all} \quad \omega \in X(\tpi)\}.$$ Then $$\mathcal{L}(\tpi)=\bigoplus_{\eta \in X(\tpi)^D} \mathcal{L}(\tpi)^\eta.$$ The subspace $\mathcal{L}(\tpi)^1$ is where $S(\tpi)$ acts trivially. Since $\text{supp}(f)\subset \{ g\in \tG(\AA) \vert \omega(g)=\eta(\omega)\}$ for all $\omega \in X(\tpi)$. We see that

$$\res \mathcal{L}(\tpi)^\eta = 0, \quad \text{unless } \eta=1.$$
Thus, $\res \mathcal{L}(\tpi)=\res \mathcal{L}(\tpi)^1.$

The main theorem about the restriction of the cusp forms is the following:

\begin{theorem}\label{thm:main}
  For any $\tmu\in[\mu]$, the morphism
  \begin{equation}
\xymatrixcolsep{5pc}\xymatrix{
   \bigoplus_{\{\tpi\}_G\in\pcusp(\tG,\tmu)_G}\mathcal{L}(\tpi)^1  \ar[r]^\res & \cusp(G,\mu)}
  \end{equation}
is a bijection.
\end{theorem}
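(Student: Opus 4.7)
The plan is to establish three properties of the restriction map in turn: well-definedness on the direct sum, surjectivity onto $\cusp(G,\mu)$, and injectivity. Well-definedness is essentially prepared already, since $\res \mathcal{L}(\tpi) \subseteq \cusp(G,\mu)$ and $\res \mathcal{L}(\tpi)^\eta = 0$ for $\eta \neq 1$, so only the $\mathcal{L}(\tpi)^1$ summands contribute. For a $G$-equivalent pair $\tpi' \cong \tpi \otimes \omega$ with $\omega \in (\tG(\AA)/\tG(F)G(\AA)\tZ(\AA))^D$, the twisting operator $\oI$ yields an isomorphism $\mathcal{L}(\tpi)^1 \xrightarrow{\sim} \mathcal{L}(\tpi')^1$ which commutes with $\res$ (as $\omega$ is trivial on $G(\AA)$), so the map on the direct sum depends only on the chosen representative per $G$-equivalence class.

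For surjectivity, given $f \in \cusp(G,\mu)$, I would extend $f$ to a cusp form $\tf$ on $\tG$ in two steps: first extend to a function $\tf_0$ on $\tZ(\AA)G(\AA)$ via $\tf_0(zg) := \tmu(z)f(g)$, well-defined because $\tmu$ restricts to $\mu$ on $Z(\AA) = \tZ(\AA) \cap G(\AA)$; then induce from $\tZ(\AA)G(\AA)$ up to $\tG(\AA)$ and take the $\tG(F)$-invariant part. Cuspidality of the resulting $\tf$ follows from cuspidality of $f$, since every parabolic of $\tG$ has unipotent radical contained in $G$, so constant terms of $\tf$ reduce to constant terms of $f$ along the induced parabolics of $G$. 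Decomposing $\tf$ spectrally as $\sum_{\tpi,\eta} \tf_{\tpi}^\eta$, only the $\eta = 1$ summands survive restriction to $G(\AA)$, realizing $f$ as a sum in the image.

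Injectivity I would handle in two cases. Across $G$-equivalence classes, if $\{\tpi\}_G \neq \{\tpi'\}_G$ then $\res \mathcal{L}(\tpi)$ and $\res \mathcal{L}(\tpi')$ share no irreducible $G(\AA)$-constituent: a common constituent would force $\Pi(\tpi_v) \cap \Pi(\tpi'_v) \neq \emptyset$ at every place $v$, Lemma \ref{gk2} would supply local twists $\tpi'_v \cong \tpi_v \otimes \omega_v$ that assemble via strong multiplicity one for $GL(n)$ into a global $\omega \in (\tG(\AA)/G(\AA))^D$, and equality of central characters together with $\tG(F)$-invariance would place $\omega$ in $(\tG(\AA)/\tG(F)G(\AA)\tZ(\AA))^D$, contradicting distinctness of classes. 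Within a single $\tpi$, if $\tf \in \mathcal{L}(\tpi)^1$ has $\tf|_{G(\AA)} = 0$, then $\tG(F)$-invariance and $\tZ(\AA)$-equivariance via $\tmu$ annihilate $\tf$ on $\tH := \tG(F)\tZ(\AA)G(\AA)$, while the $S(\tpi)$-invariance $\omega(x)\tf(x) = \tf(x)$ for all $\omega \in X(\tpi)$, combined with Tadic's local multiplicity-one Theorem \ref{thm:tadic} and the $\tG(\AA)$-irreducibility of $\tpi$, forces $\tf$ to vanish on cosets outside $\tH$ as well.

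The main obstacle is this within-class injectivity: the annihilator of $X(\tpi)$ in $\tG(\AA)/\tH$ controls the possible support of $\tf \in \mathcal{L}(\tpi)^1$, and one must match the abstract $X(\tpi)$-isotypic decomposition of $\mathcal{L}(\tpi)^1$ with the multiplicity-free restriction $\tpi|_{G(\AA)}$ provided by Theorem \ref{thm:tadic}. A clean framework is Clifford theory applied to the normal subgroup $G(\AA)\tZ(\AA)$ of $\tG(\AA)$ modulo $\tG(F)$: realize $\mathcal{L}(\tpi)^1$ as the $X(\tpi)$-invariants of a suitable induced module and read off the $G(\AA)$-restriction from the local Tadic decompositions at each place.
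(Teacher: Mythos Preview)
The paper does not give its own proof of Theorem~\ref{thm:main}: it is quoted from Hiraga--Saito \cite{hiraga2012}, with the remark at the start of Section~\ref{sec:global} that the number-field arguments there carry over verbatim to function fields under the standing hypothesis $(n,\operatorname{char} F)=1$. So there is no in-paper argument to compare against, and your sketch is effectively a reconstruction of the cited proof rather than an alternative to anything the author wrote.

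That said, your outline is broadly the Hiraga--Saito strategy. Two comments on the details. For surjectivity, ``induce from $\tZ(\AA)G(\AA)$ to $\tG(\AA)$ and take the $\tG(F)$-invariant part'' is not quite a well-defined operation landing in cusp forms, since the quotient $\tG(\AA)/\tZ(\AA)G(\AA)\cong \AA^\times/(\AA^\times)^n$ is infinite. The standard route is rather to extend $\tf_0$ to the open subgroup $\tG(F)\tZ(\AA)G(\AA)$ by left $\tG(F)$-translation and then extend by zero; the first step requires checking that $\tf_0$ is already invariant under $\tG(F)\cap\tZ(\AA)G(\AA)$, which is where the arithmetic of $n$-th powers (and the hypothesis $(n,p)=1$) enters. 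For injectivity, you have correctly isolated the within-class step as the crux, and the Clifford-theoretic matching of $X(\tpi)$-isotypes with $G(\AA)$-constituents via Tadi\'c's multiplicity-one theorem is exactly the mechanism Hiraga--Saito use; your final paragraph is the right framework, and once set up carefully it also yields the across-class orthogonality without a separate appeal to strong multiplicity one.
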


On the other hand, by Theorem \ref{thm:tadic}, we will get the following lemma about the restriction of irreducible cuspidal representations of $\tG$ to $G$ as abstract representations.

\begin{lemma}\label{lem:multiplicity}
  Let $\tpi \in \pcusp(\tG,\tmu)$, then Res$^\tG_G \tpi$ is a direct sum of irreducible admissible representations of $G$, and is multiplicity free.
\end{lemma}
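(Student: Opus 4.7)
The plan is to combine the restricted tensor product structure of $\tpi$ with Tadic's local result. Since $\tpi$ is an irreducible admissible representation of $\tG(\AA)$, Flath's theorem gives a decomposition $\tpi \cong \bigotimes'_v \tpi_v$ as a restricted tensor product of irreducible admissible local representations, with a distinguished $K_v$-spherical line at almost all places $v$. Because restriction is functorial and commutes with the restricted tensor product, we have
\begin{equation*}
\text{Res}^{\tG}_{G}\tpi \;\cong\; \bigotimes{}'_v \text{Res}^{\tG_v}_{G_v}\tpi_v.
\end{equation*}

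By Theorem \ref{thm:tadic} combined with Lemma \ref{gk1}, each local factor $\text{Res}^{\tG_v}_{G_v}\tpi_v$ decomposes as a \emph{finite} direct sum of pairwise inequivalent irreducible admissible representations, $\bigoplus_{\pi_v \in \Pi(\tpi_v)}\pi_v$. The next step I would take is to verify that at almost every unramified place, there is a distinguished $K_v\cap G_v$-spherical constituent $\pi_v^{\circ}\in \Pi(\tpi_v)$ containing the image of the spherical vector of $\tpi_v$; this is a standard unramified calculation using that $K_v \cap G_v = SL(n,\mathcal{O}_v)$ is a maximal compact subgroup of $G_v$ and that the unramified Hecke module of $\tpi_v^{K_v}$ is one-dimensional. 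Expanding the restricted tensor product of the local decompositions then produces a direct sum decomposition of $\text{Res}^{\tG}_G\tpi$ indexed by coherent choice functions $v\mapsto \pi_v\in \Pi(\tpi_v)$ (with $\pi_v = \pi_v^{\circ}$ for almost all $v$); each summand $\bigotimes'_v \pi_v$ is an irreducible admissible representation of $G(\AA)$.

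To conclude multiplicity freeness globally, I would invoke the uniqueness part of Flath's theorem for $G(\AA)$: if $\bigotimes'_v \pi_v \cong \bigotimes'_v \pi_v'$ as $G(\AA)$-representations, then $\pi_v \cong \pi_v'$ at every place $v$. Consequently, distinct coherent choice functions yield non-isomorphic summands in the decomposition above, so no irreducible constituent of $\text{Res}^{\tG}_G\tpi$ appears with multiplicity greater than one.

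The main obstacle is the bookkeeping in the middle paragraph, namely ensuring that the collection of finite local multiplicity-free decompositions assembles into an honest restricted tensor product decomposition of the global restriction. Once the existence and uniqueness of the $K_v\cap G_v$-spherical constituent at almost all $v$ are in hand, the argument becomes a formal combination of Tadic's local multiplicity-one result (Theorem \ref{thm:tadic}) with Flath's theorem on the existence and uniqueness of restricted tensor factorizations for irreducible admissible adelic representations.
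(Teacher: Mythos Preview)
Your proposal is correct and follows essentially the same line the paper indicates: the paper simply points to Theorem~\ref{thm:tadic} as the input, and later (in Subsection~\ref{subsec:induced action from gln hecke}) writes out exactly the restricted tensor product decomposition $\text{Res}^\tG_G \tpi \cong \bigoplus \otimes'_v \pi_{v,i_v}$ that you describe. Your version is a careful unpacking of what the paper leaves implicit, invoking Flath's theorem in both directions precisely as needed.
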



We will summarize what we know about restrictions of cusp forms. For any cuspidal representation $\pi$ of $G(\AA)$, we put $$[\pi]=\bigcup _{\tmu \in [\mu]}\{\tpi \in \pcusp(\tG,\tmu)\vert \pi_v \in \Pi_{G_v}^{\tG_v}(\tpi_v), \text{for all places} \quad v\}.$$ By Lemma \ref{gk2}, we know $\tpi_1, \tpi_2 \in [\pi]$ is the same thing as $\tpi_1$ and $\tpi_2$ are locally $G$-equivalent, and their restrictions to $G$(as an abstract representation) has a constituent $\pi$.

For any $\tpi \in \pcusp(\tG,\tmu)$, we have $\res \tpi$ is a subrepresentation of $\cusp(G,\mu)$, and hence a direct sum of irreducible cuspidal representations of $G$. For each component $\pi$ in $\res \tpi$, Theorem \ref{thm:main} implies that $\res$ induces a $G$-isomorphism between $\pi$ and its preimage in $\tpi$. By Lemma \ref{lem:multiplicity}, we know that $\res \tpi$ is multiplicity free as well.

By Theorem \ref{thm:main}, we have $\cusp(G,\mu) = \bigoplus_{\{\tpi\}_G\in\pcusp(\tG,\tmu)_G} \res \tpi$. Moreover, if $\pi \in \pcusp(G,\mu)$ appears in both $\res \tpi_1$, and $\res \tpi_2$, then $\tpi_1, \tpi_2 \in [\pi]$. However, we don't know if the converse is true, i.e., if $\tpi \in [\pi]$, we can't say $\pi$ appears in $\res \tpi$ a priori.

In summary, there is a decomposition of $A_0(G,\mu)$ into a direct sum of irreducible cuspidal representations, and each component $\pi$ lift to an irreducible representation $\tpi$ of $\tG$ as functions, i.e. the restriction of functions in $V_\tpi$ contains $V_\pi$. 

\section{Excursion characters for $GL(n)$}\label{sec:excursion algebra}

As a warm-up, we will first treat the excursion characters in the case of $G=GL(n)$. We will show that a excursion character $\tnu$ is determined by its restriction to unramified Hecke subalgebra $\TN=\otimes'_{v\nmid N} H_v$, where $N$ is the fixed effective divisor. As a consequence, Lafforgue's decomposition (\ref{eqn: Lafforgue decomposition}) coincides with the classical decomposition (\ref{eqn:leveled classical decomposition}).

Suppose $\tnu: \BN \rightarrow \bar{\QQ}_l$ is a character, then by studying the pseudocharacter $\tnu(S_{n,f,(\gamma_1,\gamma_2,\ldots \gamma_n)})$, where $\gamma_i \in \Gamma$, and $f\in \mathcal{O}(\hat{G}^n//\hat{G})$, we know that $\tnu$ is determined by $\tnu(S_{1,\text{Tr},\gamma})$, where $\gamma \in \Gamma$. Since $\gamma \mapsto \tnu(S_{1,\text{Tr},\gamma})$ is continuous, $\tnu$ is determined by $\tnu(S_{1,\text{Tr},\text{Frob}_v})$, where $v$ runs through places of $F$ outside $N$. This is because $\{\text{Frob}_v \vert v\nmid N\}$ is a dense subset of $\Gamma$ by Chebotorev's Theorem.

However, by compatibility with Hecke operators, we know that $S_{1,\text{Tr},\text{Frob}_v}$ is the Hecke operator of $h_{V,v}$ corresponds to the standard representation $V$ of $\hat{G}=GL(n,\bar{\QQ}_l)$. Thus, $\tnu$ is determined by $\tnu\vert_{\TN}$.

We have the decompositoin
$$ A_0(G,K_N,\bar{\QQ}_l) = \bigoplus_{\pi} \pi^{K_N} = \bigoplus_{\lambda} A_{0,\lambda},$$
where each $\pi^{K_N}$ becomes the eigenspace $\mathcal{A}_{0,\lambda}$, for some character $\lambda$ of $T_N$. By Strong Multiplicity One theorem, we know that different $\pi^{K_N}$ corresponds to different $\lambda$.

On the other hand, we also have the decomposition
$$ \mathcal{A}_0(G,K_N,\bar{\QQ}_l)=\bigoplus_{\tnu} \mathcal{A}_{0,\tnu} = \bigoplus_{\lambda} A_{0,\lambda},$$
where $\mathcal{A}_{0,\tnu}$ is the eigenspace of character $\tnu$ of $\BN$. When restricting to $\TN$, $\mathcal{A}_{0,\tnu}$ becomes the eigenspace $\mathcal{A}_{0,\lambda}$, where $\lambda=\tnu\vert_{T_N}$, we just showed that different $\tnu$ correspond to different $\lambda$. Hence, the two decompositions (\ref{eqn: Lafforgue decomposition}) and (\ref{eqn:leveled classical decomposition}) coincide with each other.


\section{Decomposition induced from $\mathcal{H}(GL(n))$}

Following Section \ref{sec:global}, we will first study the restriction of cusp forms from $\tG(\AA)$=GL$(n,\AA)$ to $G(\AA)$=SL$(n,\AA)$ more carefully, and define an action on $ \mathcal{A}_0(G,K,\bar{\QQ}_l)$ via this restriction. Recall that we have a bijection
$$\xymatrixcolsep{5pc}\xymatrix{
   \bigoplus_{\{\tpi\}_G\in\pcusp(\tG,\tmu)_G}\mathcal{L}(\tpi)^1  \ar[r]^\res & \cusp(G,\mu)},$$
where $\tmu \in [\mu]$, from Theorem \ref{thm:main}. Note that we can always find $\tmu \in [\mu]$ of finite order. Since $\II/\Fu \cong \II^1/\Fu  \times \ZZ$ as a topological group, and $Z(\AA)/Z(F)$ is isomorphic to the closed subgroup of $n$-th roots of unity in $\II^1/\Fu$, one can lift $\mu$ to a finite order character of $\II^1/\Fu$, and define $\tmu$ to be that lift tensoring with the trivial character on $\ZZ$. We will henceforth fix such a finite order $\tmu$ that lifts $\mu$.

In the following discussion, we need to fix a specific lifting, i.e., we need to specify a cuspidal representation $\tpi$ from each $G$-equivalence class $\{\tpi\}_G$. The actions induced from GL$(n,\AA)$ and its Hecke algebra do depend on this choice, however, all choices of $\tpi$ will result in the same conclusion of Proposition \ref{prop:L-packets}, and the same decomposition in Proposition \ref{prop:decomp of induced hecke algbera}, only parameterized by different characters.

\subsection{Embedding Hecke Algebra}\label{subsec:embedding hecke}
We first study how to embed Hecke algebra on SL$(n)$ to that of GL$(n)$. We fix the ground field $k=F_v$ to be a local field, with $\pi$ a uniformizer. For $G=SL(n)$, we fix the standard maximal torus in the Borel group, $T \subset B \subset G$, and $X^\bullet( \text{resp. }X_\bullet)$  be the (resp. co)character group of $G$. Let $\Phi = \Phi^+ \cup -\Phi^+$ be the roots, $\rho=\frac1{2}\sum_{\Phi^+}\alpha$, and $P^+$ be the positive coroots, $W$ be the Weyl goup. Let $G$ also denotes the $k$-points $G(k)$, and $K=G(\mathfrak{\O})$ be the maximal compact open subgroup of $G$.
Let $\mathcal{H}(G,K)$ be the Hecke algebra of locally constant compactly supported function, bi-invariant under $K$, we normalize the measure d$x$, so that volume of $K$ is $1$. And we define everything with $\tilde{}$ to indicate the corresponding notions of $GL(n)$, with the exceptions that $\Phi^+$ and $\rho$ can indicate both for $GL(n)$ and $SL(n)$, since they have the identical root system.

We have the Cartan Decomposition:
\begin{proposition}
  The group $G$(resp. $\tG$) is disjoint union of double coset $K\lambda(\pi)K$(resp. $\tK\lambda(\pi)\tK$), where $\lambda$ runs over $P^+$.
\end{proposition}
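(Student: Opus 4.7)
The plan is to establish the Cartan decomposition first for $\tG = GL(n)$ via the theory of elementary divisors over the discrete valuation ring $\O$, and then descend to $G = SL(n)$ by keeping track of determinants.

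For the $GL(n)$ case, given $g \in GL_n(F_v)$, I would first scale by a suitable power of $\pi$ so that $g \in M_n(\O)$. Then one reduces $g$ to diagonal form by iterated left and right multiplication by elements of $\tK$: at each stage, locate an entry of minimal valuation in the current block, bring it to the upper-left corner by row and column swaps, use it to clear out the rest of the first row and column (all of these being $\tK$-operations), and recurse on the lower-right $(n-1) \times (n-1)$ block. Reordering the resulting diagonal entries so that their valuations form a dominant tuple $a_1 \geq \cdots \geq a_n$ then gives $g \in \tK \, \lambda(\pi) \, \tK$ for $\lambda = (a_1, \dots, a_n)$. Disjointness of the double cosets follows because the partial sums $a_1 + \cdots + a_k$ can be recovered as the minimum valuation of the $k \times k$ minors of any representative, and these minors transform by units under left/right $\tK$-multiplication; equivalently, the elementary divisors of the $\O$-module $\O^n / g\cdot\O^n$ are invariants of $g$.

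To descend to $SL(n)$, given $g \in SL_n(F_v) \subset GL_n(F_v)$, apply the $GL(n)$ decomposition to write $g = k_1 \lambda(\pi) k_2$ with $k_i \in \tK$. Taking determinants and using $\det g = 1$ forces $\pi^{\sum a_i} \det(k_1) \det(k_2) = 1$, so $\sum a_i = 0$ (hence $\lambda$ already lies in the $SL(n)$-cocharacter lattice, which for $SL(n)$ coincides with its coroot lattice) and $\det(k_1)\det(k_2) = 1$. To upgrade the $k_i$ to lie in $K = SL_n(\O)$, set $u = \mathrm{diag}(\det(k_1)^{-1}, 1, \dots, 1) \in \tK$; then $k_1 u$ and $u^{-1} k_2$ both lie in $K$, and since $u$ is diagonal it commutes with $\lambda(\pi)$, yielding $g = (k_1 u) \, \lambda(\pi) \, (u^{-1} k_2)$. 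Disjointness for $SL(n)$ is then inherited from the $GL(n)$ case: any equality $K \lambda(\pi) K = K \mu(\pi) K$ in $SL(n)$ holds a fortiori in $GL(n)$, forcing $\lambda = \mu$. The only substantive technical step is the elementary divisor reduction for $GL(n)$, which is entirely classical, so no real obstacle arises beyond this standard argument.
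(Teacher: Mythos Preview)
Your argument is correct and is the standard proof of the Cartan decomposition via elementary divisors (Smith normal form) over a discrete valuation ring, together with a clean descent from $\tG = GL(n)$ to $G = SL(n)$ by adjusting with a diagonal unit matrix. Note, however, that the paper does not actually supply a proof of this proposition: it is simply announced as ``the Cartan Decomposition'' and used immediately afterward as a known fact. So there is no paper's own proof to compare against; your sketch just fills in the classical details that the paper takes for granted.
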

Hence a basis of Hecke algebra $\mathcal{H}(\tG,\tK)$(resp. $\mathcal{H}(G,K)$) is the set of characteristic functions $\tc_\lambda=\text{Char}(\tK\lambda(\pi)\tK)$(resp. $c_\lambda = \text{Char}(K\lambda(\pi)K)$), where $\lambda$ runs over $P^+$.  We define the embedding to be:

\begin{align*}
\iota:\mathcal{H}(G,K)&\hookrightarrow \mathcal{H}(\tG,\tK)\\
c_\lambda &\mapsto \tc_\lambda,
\end{align*}
for all $\lambda \in P^+(\text{SL}(n))$. Since $\{c_\lambda \vert \lambda \in P^+(\text{SL}(n))\}$ form a $\overline{\mathbb{Q}_l}$-basis of Hecke algebra, we can extend this linearly to $\mathcal{H}(G,K)$. This is clearly injective, and additive, it suffices to show that it is also multiplicative.

\begin{lemma}\label{lem:doublecoset decomp}
  If $K\lambda(\pi)K= \coprod x_iK$, then $\tK\lambda(\pi)\tK= \coprod x_i\tK$, where $\lambda \in P^+(\text{SL}(n))$.
\end{lemma}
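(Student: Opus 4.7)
The plan is to reduce the statement to the identity $\tK\lambda(\pi)\tK = K\lambda(\pi)\tK$, after which the conclusion follows by plugging in the hypothesis and checking disjointness.

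\medskip

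\noindent\textbf{Step 1: Decompose $\tK$ using $K$ and the torus.} I would first show that $\tK = K\cdot \tT(\mathcal{O})$, where $\tT$ is the diagonal maximal torus of $\tG = GL(n)$. Indeed, for any $\tk \in \tK$, the element $t = \text{diag}(\det(\tk), 1, \ldots, 1) \in \tT(\mathcal{O})$ satisfies $\det(\tk t^{-1}) = 1$, so $\tk t^{-1} \in K$ and hence $\tk = (\tk t^{-1})\cdot t \in K\cdot \tT(\mathcal{O})$. (This is where the general principle $\det: \tK \onto \mathcal{O}^\times$ with kernel $K$ gets used concretely.)

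\medskip

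\noindent\textbf{Step 2: Push $\tK$ across $\lambda(\pi)$.} Since $\lambda \in P^+(SL(n))$, the element $\lambda(\pi)$ lies in the diagonal torus $T \subset \tT$, so it commutes with every $t \in \tT(\mathcal{O})$. Given $\tk \in \tK$, write $\tk = kt$ with $k \in K$, $t \in \tT(\mathcal{O})$. Then
\begin{equation*}
\tk \, \lambda(\pi) = k\, t\, \lambda(\pi) = k\, \lambda(\pi)\, t \in K\,\lambda(\pi)\,\tK.
\end{equation*}
This yields $\tK\lambda(\pi) \subseteq K\lambda(\pi)\tK$, and multiplying on the right by $\tK$ gives the key equality $\tK\lambda(\pi)\tK = K\lambda(\pi)\tK$.

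\medskip

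\noindent\textbf{Step 3: Apply the hypothesis and verify disjointness.} Using $K\lambda(\pi)K = \coprod_i x_i K$ and $K \subset \tK$, I would expand
\begin{equation*}
\tK\lambda(\pi)\tK = K\lambda(\pi)K \cdot \tK = \bigcup_i x_i K\cdot \tK = \bigcup_i x_i \tK.
\end{equation*}
For disjointness, suppose $x_i\tK \cap x_j\tK \neq \emptyset$, so $x_i^{-1}x_j \in \tK$. Both $x_i, x_j$ lie in $K\lambda(\pi)K \subset G$, so $x_i^{-1}x_j \in G \cap \tK = K$, giving $x_i K = x_j K$ and hence $i = j$ by the original disjointness.

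\medskip

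\noindent\textbf{Main obstacle.} There is no serious obstacle; the content of the lemma is the observation in Step 2 that the obstruction $\tK/K \cong \mathcal{O}^\times$ to lifting coset decompositions from $SL(n)$ to $GL(n)$ can be realized inside the torus $\tT(\mathcal{O})$, which commutes with $\lambda(\pi)$. The only thing to be careful about is making sure $\lambda(\pi)$ genuinely lies in the diagonal torus of $\tG$ (so that it commutes with all of $\tT(\mathcal{O})$, not just $T(\mathcal{O})$), which is immediate because $T(SL(n)) \subset \tT(GL(n))$ as the diagonal subgroup.
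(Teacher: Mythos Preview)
Your proof is correct and follows essentially the same approach as the paper's: write $\tk = k\cdot d$ with $k\in K$ and $d=\text{diag}(\det(\tk),1,\ldots,1)\in\tK$, then commute $d$ past the diagonal element $\lambda(\pi)$ to push everything into $K\lambda(\pi)\tK$. The only difference is presentational --- you package the decomposition as $\tK = K\cdot\tT(\mathcal{O})$ and spell out the disjointness check via $G\cap\tK = K$, whereas the paper writes the element-wise computation $\tk\lambda(\pi)\tk' = k\lambda(\pi)d\tk' \in x_i\tK$ directly and declares disjointness ``clear''.
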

\begin{proof}
It is clear that $\bigcup x_i\tK \subset \tK\lambda(\pi)\tK$, and it is a disjoint union. Moreover, any element
\begin{align*}
  \tk \lambda(\pi) \tk'&=(kd)\lambda(\pi)\tk'\\
                       &=kd\lambda(\pi)\tk'\\
                       &=k\lambda(\pi)d\tk'\\
                       &=x_ik'd\tk' \in x_i\tK, \quad \text{for some } x_i
\end{align*}
where $\tk,\tk' \in \tK, k,k'\in K$, and $d \in \tK$ is some diagonal matrix, with the first entry equals det$(\tk)$, and $1$ elsewhere. Since $\lambda(\pi)$ is diagonal too, it commutes with $d$.
\end{proof}
Standard computations \cite{gross1998satake} show that $$\tc_\lambda \cdot \tc_\mu = n_{\lambda,\mu}(\nu)\tc_\nu,$$ where $\tilde{n}_{\lambda,\mu}(\nu)=\sharp\{(i,j):\nu(\pi)\in x_iy_j\tK\}$. For $\nu \in P^+$, Lemma \ref{lem:doublecoset decomp} implies that $\tilde{n}_{\lambda,\mu}(\nu)=n_{\lambda,\mu}(\nu)$, hence $\iota$ is a $\overline{\mathbb{Q}_l}$-algebra map.
The embedding $\iota$ also preserves Satake isomorphism: $$\mathcal{S}(f)(t)=\delta(t)^{1/2}\cdot \int_Nf(tn)dn, \quad f\in \mathcal{H}(G,K),$$ where d$n$ is the unique haar measure on $N$, such that $N\cap K$ has volume $1$, and $\delta(t)^{1/2}=q^{-<\mu,\rho>}$ for $t=\mu(\pi)$. This is because $\tG$ and $G$ has the same coroots, and hence $\rho$. It can also be checked for $f = c_\lambda$ that $\mathcal{S}(\iota(f))$ is supported on $X_\bullet(T)$, and
$$\mathcal{S}(\iota(f))\vert_{X_\bullet(T)}=\mathcal{S}(f), \quad f\in \mathcal{H}(\tG,\tK).$$ Thus, $\mathcal{S}(\iota(f))=\iota'(\mathcal{S}(f))$, where $\iota'$ is the natural inclusion of $\overline{\mathbb{Q}_l}[X^\bullet(\widehat{T})]^W \hookrightarrow \overline{\mathbb{Q}_l}[X^\bullet(\widehat{\tT})]^W$. Hence, if $V$ is a representation PGL$(n,\overline{\mathbb{Q}_l})\rightarrow \text{GL}(m,\CC)$, which lifts to a representation $\tilde{V}$ of GL$(n,\overline{\mathbb{Q}_l})$, then $\iota(h_{V,v})=h_{\tilde{V},v}$, where $h_{V,v}$ is the Hecke operator corresponds to $V$.

\subsection{Action induced from Hecke algebra}\label{subsec:induced action from gln hecke}
We can get an action of Hecke algebra of GL$(n)$ to $\cusp(G,\mu)$, via the restriction in Theorem \ref{thm:main}.

Fix $N_0$ an effective divisor, and $K_{N_0} \subset G(\O_F)$ the corresponding open compact subgroup of $G(\AA)$, where $K_v=G(\O_{F_v})$ for $v\nmid N_0$. Hence we have the decomposition of cusp forms of level $K_{N_0}$: $$\cusp(G,K_{N_0},\mu) = \bigoplus \pi^{K_{N_0}},$$ which is finite dimensional. Choose a finite basis $\{f_i\}$ of $\cusp(G,K_{N_0},\mu)$, we get a lifting $\{\tf_i\} \subset \cusp(\tG,\tmu)$, where $\tmu \in [\mu]$, and of finite order. Since $\tf_i$ is locally constant, there exists a compact open subgroup $\tK_i$ such that $\tf_i$ is right-invariant under $\tK_i$. By taking finite intersection, we can assume that $\{\tf_i\} \subset \cusp(\tG,\tK,\tmu)$, for some open compact subgroup $\tK \subset \prod_v \tG(\mathcal{O}_v)$ of $\tG(\AA)$.

We can then choose $N$, such that $N_0 \subset N$, and $\tK_v=\tG(\O_{F_v})$, for $v \nmid N$. Let $T_{N}=\otimes_{v\nmid N}H_v(G(F_v),K_v)$, and $\tT_{N}=\otimes_{v\nmid N}H_v(\tG(F_v),\tK_v)$ be the unramified commutative Hecke algebra of $G$, and $\tG$ respectively. Therefore, we can extend the action of  $T_N$ on $\cusp(G,K_{N_0},\mu)$ to $\tT_N$ via $\res$. More precisely, $\tilde{h}\cdot f = \res(\tilde{h}\cdot \tf)$. Note that $\tf \in \tpi^{\tK}$, hence for all places $v\nmid N$, we have $\tilde{h}_v\cdot f = \tlambda_v(\tilde{h}_v)f$, where $\tlambda_v$ is the Hecke character of $\tpi_v$. If $\tilde{h}_v=\iota(h_v)$, then $\tilde{h}_v\cdot f = h_v\cdot f$. This can be checked similarly by Lemma \ref{lem:doublecoset decomp}. Let $\tlambda=\otimes_{v\nmid N}\tlambda_v$, then $\lambda = \tlambda\vert_T$ is the normal character for $T$.

Suppose $\tpi = \otimes_v' \tpi_v$, by Lemma \ref{lem:multiplicity} we know that $$ \text{Res}^\tG_G \tpi = \text{Res}^\tG_G \otimes_v' \tpi_v \cong \bigoplus_{\text{each } v, i_v=1 \ldots r_v} \otimes_v' \pi_{v,i_v},$$ where Res$^{\tG_v}_{G_v} \tpi_v = \oplus_{i=1}^{r_v} \pi_{v,i}$ is multiplicity free, and $\pi_{v,1}$ is unramified ,$i_v=1$ for almost all $v$.

Similarly we have $$\res(\tpi) = \oplus \pi_k,$$ is the restriction of cusp forms, where $\pi_k$'s are irreducible cuspidal representations of $G$, and it is multiplicity free. We call $\{\pi_k \vert \res(\tpi) = \oplus \pi_k\}$ an \textbf{L-packet} of $\tpi$, denoted as $L(\tpi)$ (we'll sometimes abuse this term by referring $\res(\tpi)$ as $L$-packet too). Also, let $\overline{L(\tpi)}$ be the set of isomorphism classes of $G(\AA)$ representations in $L(\tpi)$, it is clearly a subset of the set of (classes of) irreducible components of Res$_G^\tG(\tpi)$.

The following proposition is essentially showed in \cite{gelbart1982indistinguishability}

\begin{proposition}\label{prop:L-packets}
If $\overline{L(\tpi_1)} \cap \overline{L(\tpi_2)} \neq \emptyset$, then $\overline{L(\tpi_1)} = \overline{L(\tpi_2)}$. In particular, if two irreducible cuspidal representations of $G(\AA)$ are in the same L-packet, then they have the same multiplicity in $\cusp(G,\mu)$.
\end{proposition}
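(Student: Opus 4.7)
The plan is to reduce the first assertion to local data via Lemma \ref{gk2}, then to derive the multiplicity statement from Theorem \ref{thm:main} together with Lemma \ref{lem:multiplicity}.

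First, suppose $\pi \in \overline{L(\tpi_1)} \cap \overline{L(\tpi_2)}$. At every place $v$ the local component $\pi_v$ is a common constituent of $\text{Res}_{G_v}^{\tG_v}\tpi_{i,v}$, so Lemma \ref{gk2} forces $\Pi_{G_v}^{\tG_v}(\tpi_{1,v}) = \Pi_{G_v}^{\tG_v}(\tpi_{2,v})$ and produces local characters $\omega_v \in (\tG_v/G_v)^D$ with $\tpi_{2,v} \cong \tpi_{1,v}\otimes \omega_v$. Since $\tpi_1$ and $\tpi_2$ share the fixed central character $\tmu$ and the $\omega_v$ are unramified outside a finite set, these assemble into a global character $\omega \in (\tG(\AA)/G(\AA)\tZ(\AA))^D$ with $\tpi_2 \cong \tpi_1 \otimes \omega$; that is, $\tpi_1$ and $\tpi_2$ are locally $G$-equivalent.

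Next I would deduce $\overline{L(\tpi_1)} = \overline{L(\tpi_2)}$ from this local equivalence. Abstractly, $\text{Res}_G^\tG \tpi_1 \cong \text{Res}_G^\tG \tpi_2$, so the two sides share the same set of $G(\AA)$-isomorphism classes of constituents. What still needs checking is that $\overline{L(\tpi_i)}$, which a priori is only a subset of those constituents, is the same subset for $i=1,2$. This is the essential content of Gelbart--Knapp \cite{gelbart1982indistinguishability}: their analysis of the twisting operators $\oI$ together with the uniqueness of the global Whittaker model for $\tG$ characterises $\overline{L(\tpi)}$ in terms of the collection $\{\Pi_{G_v}^{\tG_v}(\tpi_v)\}_v$ alone, so it depends only on the locally $G$-equivalence class of $\tpi$. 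Combined with the first step this gives $\overline{L(\tpi_1)} = \overline{L(\tpi_2)}$.

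For the multiplicity corollary, Lemma \ref{lem:multiplicity} says that every iso class $\pi$ appears at most once in any $\res \mathcal{L}(\tpi)^1$, so Theorem \ref{thm:main} yields
\[
   m(\pi) \;=\; \#\{\{\tpi\}_G \in \pcusp(\tG,\tmu)_G : \pi \in \overline{L(\tpi)}\}.
\]
If $\pi,\pi' \in \overline{L(\tpi_0)}$ for some $\tpi_0$, then by the first part $\pi \in \overline{L(\tpi)}$ iff $\overline{L(\tpi)} = \overline{L(\tpi_0)}$ iff $\pi' \in \overline{L(\tpi)}$, so the two counting sets coincide and $m(\pi) = m(\pi')$.

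The hard part will be the step in the second paragraph: as the paper emphasises immediately before the proposition, local compatibility $\tpi \in [\pi]$ does not obviously entail that $\pi$ actually appears in $\res \tpi$, and closing this gap is precisely what requires importing the Whittaker-model analysis of \cite{gelbart1982indistinguishability} rather than any elementary manipulation of the objects assembled so far.
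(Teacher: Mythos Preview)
Your proposal is correct and matches the paper's approach: the paper gives no independent argument for this proposition and simply records that it ``is essentially showed in \cite{gelbart1982indistinguishability}''.  You have accurately unpacked what that citation entails---the reduction to local $G$-equivalence via Lemma~\ref{gk2}, the appeal to the Whittaker-model analysis of Gelbart--Knapp to identify $\overline{L(\tpi)}$ with data depending only on the local packets, and the multiplicity count via Theorem~\ref{thm:main} and Lemma~\ref{lem:multiplicity}---and you have correctly flagged that the passage from ``$\tpi\in[\pi]$'' to ``$\pi$ appears in $\res\tpi$'' is exactly the nontrivial input coming from \cite{gelbart1982indistinguishability}, as the paper itself warns just above the proposition.
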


Thus, it is easy to see that the L-packet $\res(\tpi)^{K_{N_0}}=\oplus \pi^{K_{N_0}}$ is contained in $\tlambda$-eigenspace $\cusp(G,K_{N_0},\mu)_\tlambda$, where $\tlambda$ is the unramified Hecke character for $\tpi$. By Multiplicity One, they are actually equal. If two L-packets are isomorphic (contains isomorphic irreducible components), then they will be contained in the same $\lambda$-eigenspace $\cusp(G,K_{N_0},\mu)_\lambda$. We conclude in the following proposition:

\begin{proposition}\label{prop:decomp of induced hecke algbera}
  The action of $\tT_N$ extends the action of $T_N$, and we have the decomposition $\cusp(G,K_{N_0},\mu) = \bigoplus_{\lambda} \cusp(G,K_{N_0},\mu)_\lambda$, where $\cusp(G,K_{N_0},\mu)_\lambda = \bigoplus_{\tlambda, \tlambda\vert_{T}=\lambda} \cusp(G,K_{N_0},\mu)_{\tlambda}$, where $\lambda$ and $\tlambda$ are characters of $T_N$ and $\tT_N$. Moreover, $\cusp(G,K_{N_0},\mu)_{\tlambda}=\res(\tpi)^{K_{N_0}}$ is an L-packet. For each $\cusp(G,K_{N_0},\mu)_{\tlambda}$, we can associate a Langlands parameter $\sigma$ to it, which is compatible with Satake Isomophism.
\end{proposition}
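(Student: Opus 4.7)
The plan is to assemble the Hecke algebra embedding of Subsection~\ref{subsec:embedding hecke}, the restriction bijection of Theorem~\ref{thm:main}, and Lafforgue's construction recalled in Section~\ref{sec:excursion algebra}, each of which contributes one part of the statement.

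First I would verify that $\tT_N$ genuinely extends $T_N$. With $\iota\colon \mathcal{H}(G_v,K_v)\hookrightarrow \mathcal{H}(\tG_v,\tK_v)$ constructed place by place, the common coset decomposition in Lemma~\ref{lem:doublecoset decomp} ensures that for any lift $\tf \in \cusp(\tG,\tK,\tmu)$ of $f\in\cusp(G,K_{N_0},\mu)$, one has $\res(\iota(h_v)\cdot\tf)=h_v\cdot f$. Hence the action of $\tilde h\in\tT_N$ on $f$ defined through the lift $\tf$ refines the $T_N$-action under $\iota$, and the outcome is independent of the choice of lift because two such lifts differ by a function whose values on $G(\AA)$ vanish.

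Next, Theorem~\ref{thm:main} yields a decomposition $\cusp(G,K_{N_0},\mu)=\bigoplus_{\{\tpi\}_G} \res(\tpi)^{K_{N_0}}$, summing over $G$-equivalence classes of $\tpi\in\pcusp(\tG,\tmu)$ having nonzero $\tK$-fixed vectors. For such $\tpi$, the unramified Hecke character $\tlambda=\otimes_{v\nmid N}\tlambda_v$ of $\tpi^{\tK}$ is a scalar eigencharacter, so after restriction every vector in $\res(\tpi)^{K_{N_0}}$ is a $\tlambda$-eigenvector for $\tT_N$. Conversely, if $f\in\cusp(G,K_{N_0},\mu)$ is a $\tlambda$-eigenvector, its lift must sit inside $\tpi^{\tK}$ for the unique $\tpi$ with Hecke character $\tlambda$ (Strong Multiplicity One for $GL(n)$), so $f\in\res(\tpi)^{K_{N_0}}$. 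This gives $\cusp(G,K_{N_0},\mu)_{\tlambda} = \res(\tpi)^{K_{N_0}}$, which by the definition introduced in Subsection~\ref{subsec:induced action from gln hecke} is an L-packet at level $K_{N_0}$. Grouping the $\tlambda$'s by the common restriction $\lambda=\tlambda|_{T_N}$ produces the nested decomposition stated; distinct $\tlambda$'s over the same $\lambda$ correspond to unramified twists of $\tpi$ that remain locally $G$-equivalent, in accord with Proposition~\ref{prop:L-packets}.

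Finally, to attach a Langlands parameter to each block, I would invoke Section~\ref{sec:excursion algebra}: the character $\tlambda$ of $\tT_N$ extends uniquely to a character $\tnu$ of $\tBN$, and Lafforgue's construction supplies a semisimple Langlands parameter $\tsigma\colon\Gamma\to GL(n,\QQl)$ whose Satake conjugacy class at each $v\nmid N$ recovers $\tlambda_v$. Composing with the projection $GL(n)\to\mathrm{PGL}(n)=\hat G$ produces $\sigma$, and its compatibility with the Satake isomorphism for $G$ follows from the commutative diagram relating the two Satake maps through $\iota$, which was checked in Subsection~\ref{subsec:embedding hecke} and which dualizes to the projection $GL(n)\to\mathrm{PGL}(n)$ on semisimple conjugacy classes. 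The main subtlety I anticipate here is bookkeeping: one must verify that the ambiguity in the choice of $\tpi$ within its $G$-equivalence class --- twists by $\omega\in(\tG(\AA)/\tG(F)G(\AA)\tZ(\AA))^D$ --- is absorbed in passing from $GL(n)$ to $\mathrm{PGL}(n)$, so that $\sigma$ is well defined as a $\hat G$-conjugacy class independently of the chosen lift.
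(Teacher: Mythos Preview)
Your overall architecture matches the paper's: use Theorem~\ref{thm:main} to decompose into blocks $\res(\tpi)^{K_{N_0}}$, identify each block with a $\tlambda$-eigenspace via (Strong) Multiplicity One for $GL(n)$, group by $\lambda=\tlambda|_{T_N}$, and attach $\sigma$ by projecting the $GL(n)$ parameter $\tsigma$ to $PGL(n)$, with Satake compatibility checked through the $\iota$-compatibility established in Subsection~\ref{subsec:embedding hecke}. That is exactly what the paper does.

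There is, however, one incorrect step in your first paragraph. You claim the $\tT_N$-action on $f$ is independent of the choice of lift $\tf$ because ``two such lifts differ by a function whose values on $G(\AA)$ vanish.'' This argument fails: for $\tilde h\in\tT_N$ not in the image of $\iota$, the convolution $\tilde h\cdot\tf$ samples $\tf$ on cosets lying outside $G(\AA)$, so a $\tf$ vanishing on $G(\AA)$ can be sent to one that does not. The paper does \emph{not} claim independence; rather it fixes once and for all a representative $\tpi$ in each $G$-equivalence class and uses the bijection of Theorem~\ref{thm:main} to pin down a specific lift, then explicitly remarks (immediately after the proposition) that the induced $\tT_N$-action genuinely depends on this choice --- replacing $\tpi$ by $\tpi\otimes\omega$ changes $\tlambda$ --- while the underlying decomposition into $L$-packets is unchanged and merely relabeled. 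Your own final paragraph, where you note that the ambiguity in $\tpi$ is absorbed upon projection to $PGL(n)$, is in fact consistent with this; you should align the first paragraph with it by dropping the independence claim and adopting the fixed-lift convention.
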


For $\cusp(G,K_N,\mu)_{\tlambda}=\res(\tpi)^{K_{N_0}}$, we have a Langlands parameter $\tsigma:\Gamma \longrightarrow \text{GL}(n,\bar{\QQ}_l)$ corresponding to $\tpi$, compose it with the projection to PGL$(n,\bar{\QQ}_l)$, we get the desired Langlands parameter $\sigma$. Let $V$ be an irreducible representation of PGL$(n,\bar{\QQ}_l)$, it lifts to an irreducible representation $\tilde{V}$ of GL$(n,\bar{\QQ}_l)$. By compatibility with Satake Isomorphism for GL$(n)$, we know that $h_{\tilde{V},v}$ acts on $\cusp(\tG,\tK,\tmu)_{\tsigma}(=\tpi^\tK)$, by multiplication by the scalar $\chi_{\tilde{V}}(\tsigma(\text{Frob}_v))$, where $\chi_{\tilde{V}}$ is the character of $\tilde{V}$. Since $\iota$ is compatible with Satake Isomorphism, $h_{V,v}$ acts on $\cusp(G,K,\mu)_{\sigma}=\res(\tpi^\tK)$, also by multiplication by the scalar $\chi_V(\sigma(\text{Frob}_v))=\chi_{\tilde{V}}(\tsigma(\text{Frob}_v))$.

\begin{remark}
  The definition of $T_N$ and $\tT_N$ involves $N$. Hence, in Proposition \ref{prop:decomp of induced hecke algbera}, the decomposition under the action of $T_N$ does depend on $N$. However, the decomposition under the action of $\tT_N$ does not depend on $N$, thanks to Multiplicity One.
\end{remark}

\begin{remark}
  The action induced from $\tT_N$ does depend on the choice of lifting. If we choose $\tpi_1=\tpi_2\otimes \omega$, where $\omega\in (\tG(\AA)/\tG(F)G(\AA)Z(\AA))^D$, then $\res(\tpi_1)=\res(\tpi_2)$, but $\tT_N$ acts on it as $\tlambda_i$ if we lift it to $\tpi_i$, where $\tlambda_i$ are the normal Hecke character for $\tpi_i$. By Strong Multiplicity One, $\tlambda_1\neq \tlambda_2$. Again, the decomposition in Proposition \ref{prop:decomp of induced hecke algbera} doesn't depend on lifting, different liftings will only result in different characters $\tlambda$ parameterizing the same components.
\end{remark}

\section{Excursion character for SL(n)}

\subsection{Decomposition coincide} \label{subsec: coincide}
In Subsection \ref{subsec:induced action from gln hecke}, we defined an induced action of unramified Hecke algebra $\tT_N$ of GL$(n)$ to $\cusp(G, K_{N_0}, \mu)$. By the discussion in Section \ref{sec:excursion algebra}, we know that the action of $\tBN$ on $\cusp(\tG, \tmu)$ uniquely extends that of $\TN$. We can similarly define the induced action of $\tBN$ on $\cusp(G, K_{N_0}, \mu)$, this will result in the same decomposition in Proposition \ref{prop:decomp of induced hecke algbera}, but parameterized by characters $\tnu$ of $\tBN$.

Then the decomposition in Proposition \ref{prop:decomp of induced hecke algbera} can be written as: $$\cusp(G,K_{N_0},\mu) = \bigoplus_{\tnu} \cusp(G,K_{N_0},\mu)_\tnu,$$ where $\cusp(G,K_{N_0},\mu)_\tnu = \cusp(G,K_{N_0},\mu)_\tlambda=\res(\tpi)^{K_{N_0}}$, for the unique character $\tnu$ of $\tBN$, such that $\tlambda=\tnu\vert_{\tT}$, and $\tpi$ corresponds to $\tlambda$. On the other hand, we have the actual decomposition given in Lafforgue's paper for SL$(n)$. $$\cusp(G,K_{N_0},\mu) = \bigoplus_{\nu} \cusp(G,K_{N_0},\mu)_\nu,$$ where $\nu$ is the character of $\BN = \mathcal{B}_N(SL(n))$. There is an obvious way that $\BN$ can be embedded into $\tBN$, if we get $\nu$ by restricting $\tnu$ to $\BN$, then the corresponding Langlands parameter $\sigma$ is the composition of $\tilde{\sigma}$ with the projection $GL(n) \rightarrow PGL(n)$.

By Proposition 12.5 of \cite{lafforgue2012chtoucas}, we have $\cusp(G,K_{N_0},\mu)_\tnu \subset \cusp(G,K_{N_0},\mu)_\nu$. We know that the equality holds $\cusp(G,K_{N_0},\mu)_\tnu = \cusp(G,K_{N_0},\mu)_\nu$, because if $\tnu_1 \vert_{\BN} = \tnu_2 \vert_{\BN}$, then the Langlands parameter $\tilde{\sigma}_1$ and $\tilde{\sigma}_2$ are equal after composing with the projection $GL(n) \rightarrow PGL(n)$, it is easy to verify that $\tpi_1$ and $\tpi_2$ are hence $G$-equivalent, which implies $\tpi_1 = \tpi_2$, and $\tnu_1 = \tnu_2$, since we only choose one representative from a $G$-equivalence class in the decomposition of Theorem\ref{thm:main}. Meanwhile, since $\res(\tpi)$ is multiplicity free. The multiplicity of $SL(n)$ can be bounded by the number of $L$-packets $\res(\tpi)^{K_{N_0}}$, in the $\lambda$-eigenspace, this will be the subject of the next subsection.

\subsection{Higher Multiplicities of $SL(n)$}
Using the decomposition in Proposition \ref{prop:decomp of induced hecke algbera}, and its coincidence with the Lafforgue's decomposition, we may find an upper bound for the multiplicities of SL$(n)$, $M(\pi_0) = \sharp \{\pi \subset \cusp(G,\QQl) \vert \pi \cong \pi_0\}$. Since $\pi_0$ is countable dimension(countable restricted tensor product of countable dimension representations), and $\overline{\QQ_l}$ is algebraically closed and uncountable, by Schur's Lemma, $\pi_0$ has a central character $\mu$. Let $\pi_1 \cong \pi_2$ be two irreducible cuspidal representations in $\cusp(G,\mu)$ for that $\mu$. In addition, there exists $N_0$, such that $\pi_i^{K_{N_0}} \neq 0$, and both $\pi_i^{K_{N_0}}$ appear in $\cusp(G,K_{N_0},\mu)_\lambda$, for some character $\lambda$ of an unramified Hecke algebra $T_N$, hence the multiplicity of SL$(n)$ is bounded by the multiplicity in $\cusp(G,K_N,\mu)_\lambda$. By Proposition \ref{prop:decomp of induced hecke algbera}, this is bounded by the cardinality of $\{\tlambda \in \text{Hom}(\tT_N,\overline{\QQ_l}) \vert \tlambda\vert_{T_N}=\lambda, \text{ and is realized as a cuspidal representation in } \cusp(\tG, \tmu)\}/G-\text{equivalence}$.

By the work of L. Lafforgue \cite{lafforgue2002chtoucas}, we know that if $\tlambda$ comes from a cuspidal representation, it is attached with a Langlands parameter, which is the same thing as a pseudocharacter $\tTheta(m,f,(\gamma_i))$. By Chebotarev Density's theorem, we know that $\tTheta(1,Tr_V,\gamma)$ is given by $\lambda$, where $V$ is any irreducible representation of $GL(n)$ that factors through PGL$(n)$. We also know that $\tTheta(1,Det,\gamma)$ is given by $\tmu$, by Langlands Correspondance for GL$(n)$. We summarize:
\begin{proposition}\label{prop:multiplcity of SLn}
  The number of isomorphic irreducible components of $\cusp(G,\overline{\QQ_l})$ (which corresponds to a character $\lambda$ of some unramified Hecke algebra of $SL(n)$), is bounded above by the number of $G$-equivalent classes of pseudocharacters $\tTheta(m,f,(\gamma_i))$ of GL$(n)$, such that $\tTheta(1,Tr_V,\gamma)$ (where $V$ is any representation of $GL(n)$ that factors through $PGL(n)$) is given by $\lambda$, and $\tTheta(GL(n))(1,Det,\gamma)$ is given by $\tmu$.
\end{proposition}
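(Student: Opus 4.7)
The plan is to make precise each step outlined in the paragraph preceding the statement. I start with an irreducible cuspidal representation $\pi_0 \subset \cusp(G, \QQl)$ and write $M(\pi_0)$ for the multiplicity to be bounded. By the Schur's lemma argument recalled in the paper (using that $\pi_0$ has countable dimension over the uncountable algebraically closed $\QQl$), $\pi_0$ admits a central character $\mu$; fix a finite-order lift $\tmu \in [\mu]$ as in Section \ref{sec:global}. Every copy of $\pi_0$ in $\cusp(G,\mu)$ has the same $K_{N_0}$-fixed vectors, so after choosing an effective divisor $N_0$ with $\pi_0^{K_{N_0}} \neq 0$, the full multiplicity $M(\pi_0)$ is realized inside the finite-dimensional space $\cusp(G, K_{N_0}, \mu)$, and it suffices to bound the multiplicity of $\pi_0^{K_{N_0}}$ there.

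I then invoke Proposition \ref{prop:decomp of induced hecke algbera}. The $T_N$-action places $\pi_0^{K_{N_0}}$ in a single eigenspace $\cusp(G, K_{N_0}, \mu)_\lambda$, and the induced $\tT_N$-action refines this to
\begin{equation*}
\cusp(G, K_{N_0}, \mu)_\lambda \;=\; \bigoplus_{\tlambda|_{T_N} = \lambda} \cusp(G, K_{N_0}, \mu)_{\tlambda},
\end{equation*}
where each nonzero summand equals $\res(\tpi)^{K_{N_0}}$ for a representative $\tpi$ of a $G$-equivalence class in $\pcusp(\tG, \tmu)$, and is therefore multiplicity-free by Lemma \ref{lem:multiplicity}. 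Hence $\pi_0^{K_{N_0}}$ occurs at most once in each summand, and $M(\pi_0)$ is bounded above by the number of characters $\tlambda$ of $\tT_N$ that extend $\lambda$ and arise from a cuspidal $\tpi \in \pcusp(\tG, \tmu)$, counted up to $G$-equivalence on the $\tpi$-side.

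The final step is to convert this count into a count of pseudocharacters. By L. Lafforgue's correspondence, to each cuspidal $\tpi$ is attached a Langlands parameter $\tsigma : \Gamma \to GL(n, \QQl)$ and equivalently a pseudocharacter $\tTheta(m, f, (\gamma_i)) = f(\tsigma(\gamma_1), \ldots, \tsigma(\gamma_m))$. Combining Chebotarev density with the compatibility of Satake with the embedding $\iota$ from Subsection \ref{subsec:embedding hecke}, the condition $\tlambda|_{T_N} = \lambda$ becomes the requirement that $\tTheta(1, Tr_V, \gamma)$ agree with $\lambda$ for every representation $V$ of $GL(n)$ factoring through $PGL(n)$, since the Hecke operators in the image of $\iota$ are exactly those attached to such $V$. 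Simultaneously, the fixed central character $\tmu$ corresponds via local-global Langlands for $GL(n)$ to $\tTheta(1, Det, \gamma) = \tmu$.

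The main obstacle is to check that $G$-equivalence of the liftings $\tpi$ matches $G$-equivalence of pseudocharacters on the nose: i.e.\ that twisting $\tpi$ by $\omega \in (\tG(\AA)/\tG(F)G(\AA))^D$ corresponds via class field theory to twisting $\tsigma$ by the associated character of $\Gamma$, and that such a twist preserves $\tTheta(1, Tr_V, \cdot)$ for $V$ factoring through $PGL(n)$ while shifting $\tTheta(1, Det, \cdot)$ only within the prescribed class. Once that compatibility is in hand, the injection of $G$-equivalence classes of contributing $\tlambda$'s into $G$-equivalence classes of pseudocharacters satisfying the two conditions delivers the desired upper bound.
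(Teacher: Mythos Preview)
Your proposal is correct and follows essentially the same route as the paper: Schur's lemma for the central character, reduction to level $K_{N_0}$, the decomposition of Proposition~\ref{prop:decomp of induced hecke algbera} together with multiplicity-freeness of each $\res(\tpi)$, and then L.~Lafforgue plus Chebotarev to pass from Hecke characters $\tlambda$ to pseudocharacters. The one place you are more cautious than necessary is your ``main obstacle'': the paper simply \emph{defines} $G$-equivalence of pseudocharacters to mean $G$-equivalence of the corresponding cuspidal representations (stated immediately after the proposition), so the compatibility you propose to verify is tautological rather than a genuine step.
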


By abuse of the notation, we say two pseudocharacters are $G$-equivalent, if their corresponding cuspidal representations are. We know that the pseudocharacters of $GL(n)$ is determined by $\tTheta(1,Tr,\gamma)$, where $\gamma$ runs over $\{Frob_v \vert v\nmid N\}$.

In Proposition \ref{prop:multiplcity of SLn}, we can take $V = (Std)^{\otimes n} \otimes \wedge^n Std$, and we see that $Tr_V = \frac{Tr^n}{Det}$, hence $\tTheta(1,Tr,\gamma)^n$, where $\gamma$ runs over $\{Frob_v \vert v\nmid N\}$, are given by $\lambda$ and $\tmu$. In another word, $\lambda$ and $\tmu$ in Proposition \ref{prop:multiplcity of SLn} determine such extensions up to $n$-th roots of unity.

\bibliographystyle{alpha}
\bibliography{note}
\end{document}